\theoremstyle{plain}
\newtheorem{thm}{Theorem}[section]
 \newtheorem{lem}[thm]{Lemma}
 \newtheorem*{lem*}{Lemma}
 \newtheorem{prop}[thm]{Proposition}
 \newtheorem{cor}[thm]{Corollary}
\theoremstyle{definition}
 \newtheorem{definition}[thm]{Definition}
 \theoremstyle{remark}
\newcommand{\al}{\alpha}
\newcommand{\inv}{^{-1}}
\newcommand{\sg}{\sigma}
\DeclarePairedDelimiter\ceil{\lceil}{\rceil}
\DeclarePairedDelimiter\floor{\lfloor}{\rfloor}
\newcommand{\Id}{\textrm{Id}}
\newcommand{\spn}{\textrm{span}}
\newcommand{\lb}{\langle}
\newcommand{\rb}{\rangle}
\newcommand{\wt}{\widetilde}
\newcommand{\bmat}[1]{\left[\begin{array}{#1}}
\newcommand{\emat}{\end{array}\right]}
\title{A bound on the codimensions of a PI-algebra using group geometry}
\author{Christopher S. Henry}
\begin{document}
\begin{abstract} In this note we  draw a connection between noncommutative algebra and geometric group theory. Specifically, we ask whether it is possible to bound the sequence of codimensions for an associative PI-algebra using techniques from geometric group theory. The classic and best known bound on codimension growth was derived by finding a ``nice'' spanning set for the multilinear polynomials of degree $n$ inside the free algebra. This spanning set  corresponds  to permutations in the symmetric group $S_n$ which are so-called  \emph{$d$-good}, where $d$ is the degree of the identity satisfied by the algebra. The motivation for our question comes from the fact that there is an obvious relationship between the word metric on $S_n$ and the property of being $d$-good. We answer in the affirmative, by finding a spanning set that corresponds to permutations which are large with respect to the word metric. We provide an explicit algorithm and formula for calculating the size  of the resulting bound, and demonstrate that it is asymptotically worse than the classic one.

\noindent{\bf AMS MSC classes:} 16R99,20F10.

\noindent{\bf Key-words:} PI-algebras, codimension growth, geometric group theory.
\end{abstract}

\maketitle

\section{Introduction}

Let $A$ be an associative polynomial identity algebra (PI-algebra) over a field $k$, satisfying an identity of degree $d$. It is a well known result of Regev (\cite{R-1978}) that the sequence of codimensions  $\{c_n(A)\}$  is bounded exponentially in $n$. The bound comes from counting the number of $d$-good permutations in the symmetric group $S_n$, for which Dilworth has shown that there are at most $(d-1)^{2n}$. One can identify permutations  with  monomials in the subspace $P_n \subset k\lb X\rb$ of multilinear polynomials of degree $n$, where $k\lb X\rb =k\lb x_1, x_2, \dotsc \rb$ is the free algebra on countably many variables. The existence of an identity of degree $d$ implies that  $P_n$ is spanned - modulo the identities of $A$ - by $d$-good monomials, which gives the result.

In general, the strategy for bounding the codimensions can be described as follows: identify a collection of  monomials  which form a spanning set for $P_n$ modulo the identities of $A$, and then calculate the size of this spanning set as $n$ grows large. The strategy itself is important because it has been adapted in several ways to provide arguments which establish the \emph{existence} of an identity for certain algebras; see as examples \cite{BGR-98}, \cite{BZ-98}, \cite{GZ-2005}, and \cite{PR-13}. 
Roughly speaking, in such arguments applied to associative algebras one still counts the number of $d$-good monomials in some  appropriate free object.  In the Lie algebra case however, the notion of $d$-indecomposable is used in place of $d$-good, and a  more complicated function is required to count the number of $d$-indecomposable monomials. It is therefore natural to ask if there are alternative ways of finding such a spanning set and estimating its size.

In this note,  we ask whether it is possible to find a spanning set for $P_n$ that is based on the geometry of the symmetric group, i.e. viewing $S_n$ as a metric space in the spirit of geometric group theory. The motivation for this question stems from the fact that both the word metric  and ``$d$-goodness'' provide a measure of the extent to which a given permutation is out of order. In fact, we find an easily stated relationship between the two concepts in Lemma \ref{lem-dgoodsize}.
One might hope to exploit this relationship to provide an improved bound on the codimensions compared to the one given by $d$-good monomials.

We find mixed results. On one hand, we are able to develop a novel strategy for bounding codimensions that is purely based on  geometry. The spanning set we derive corresponds to  monomials which are ``large'', that is,  those for which the associated permutation is a certain distance from the identity. Furthermore, we present an explicit formula and a nice algorithm for calculating the size of our bound. On the other hand, we confirm that our bound is asymptotically worse than the one provided by Regev, as it grows on the order of $n!$ as opposed to exponentially. 




\section{Preliminaries}
We present the classic (i.e. Regev's) bound on the codimensions of a PI-algebra, and then introduce some notation required for our results. 


\subsection{Regev's bound on codimension}
We follow the exposition in Chapter 4.2 of \cite{GZ-2005}. Denote by $S_n$ the symmetric group on $n$ elements and let $\sg \in S_n$. For $d \in \{2, \dotsc, n\}$ we say that $\sg$ is $d$-bad if there exists indices $1 \leq i_1 < \dotsc < i_d \leq n$ such that $\sg(i_1) >  \dotsc > \sg(i_d)$. If $\sg$ is not $d$-bad then we say it is $d$-good. Dilworth originally  provided the bound on the number of $d$-good permutations in a well known result.

\begin{lem} Let $d \in \{2 , \dotsc , n\}$. The number of $d$-good permutations is bounded above by $(d-1)^{2n}$.\end{lem}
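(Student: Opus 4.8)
The plan is to reformulate $d$-goodness in terms of longest decreasing subsequences and then exhibit an explicit injection from the set of $d$-good permutations into a set of cardinality $(d-1)^{2n}$. First observe that $\sigma \in S_n$ is $d$-bad precisely when it contains a decreasing subsequence of length $d$; hence $\sigma$ is $d$-good if and only if the length of its longest decreasing subsequence, call it $\ell(\sigma)$, satisfies $\ell(\sigma) \le d-1$. I would also record the elementary fact that $\ell(\sigma) = \ell(\sigma^{-1})$, since a decreasing subsequence of $\sigma$ corresponds bijectively to one of $\sigma^{-1}$.

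Next, to each $\sigma$ I associate the integer vector $a(\sigma) = (a_1, \dots, a_n)$, where $a_k$ is the length of the longest decreasing subsequence of $\sigma(1), \dots, \sigma(k)$ that ends at $\sigma(k)$. By construction $1 \le a_k \le \ell(\sigma) \le d-1$, so $a(\sigma) \in \{1, \dots, d-1\}^n$ whenever $\sigma$ is $d$-good. The key step is then to show that the map
$$\Phi \colon \sigma \longmapsto \bigl(a(\sigma),\, a(\sigma^{-1})\bigr) \in \{1,\dots,d-1\}^n \times \{1,\dots,d-1\}^n$$
is \emph{injective}. Granting this, the set of $d$-good permutations injects into a set of size $(d-1)^n \cdot (d-1)^n = (d-1)^{2n}$, which is exactly the desired bound.

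The injectivity of $\Phi$ is the heart of the matter, and it is precisely where the Robinson--Schensted correspondence enters. The quantity $a_k$ is exactly the row into which the box at step $k$ is placed under Schensted row-insertion of $\sigma$; consequently $a(\sigma)$ is the row-reading word of the recording tableau $Q(\sigma)$, and since a standard Young tableau is determined by the sequence of rows occupied by its entries $1, 2, \dots, n$, the vector $a(\sigma)$ recovers $Q(\sigma)$. By the symmetry $P(\sigma) = Q(\sigma^{-1})$ of the correspondence, the vector $a(\sigma^{-1})$ recovers the insertion tableau $P(\sigma)$. Since the pair $(P(\sigma), Q(\sigma))$ determines $\sigma$ uniquely, $\Phi$ is injective. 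I expect this to be the principal obstacle: one must either invoke Robinson--Schensted as a black box or develop the insertion algorithm far enough to verify both that $a_k$ equals the insertion row and that the two resulting tableaux jointly reconstruct $\sigma$.

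Finally, I would note that a more self-contained route proceeds through Dilworth's (or Mirsky's) theorem: the constraint $\ell(\sigma) \le d-1$ forces the positions $\{1,\dots,n\}$ to decompose into at most $d-1$ increasing subsequences, the level sets of $a(\sigma)$ furnishing a canonical such decomposition. The subtlety --- and the reason I favor the Robinson--Schensted formulation --- is that this single decomposition does \emph{not} by itself pin down $\sigma$, so a second, dual statistic (here $a(\sigma^{-1})$) is unavoidable if one wants an honest injection rather than merely a bound on the number of chain decompositions.
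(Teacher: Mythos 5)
Your overall plan---bound the $d$-good permutations by an explicit injection into $\{1,\dots,d-1\}^n \times \{1,\dots,d-1\}^n$---is sound (the paper itself only cites Dilworth and gives no proof), but the justification of the key injectivity step is wrong. It is \emph{not} true that $a_k$, the length of the longest decreasing subsequence ending at $\sigma(k)$, equals the row in which the new box is created at step $k$ of Schensted row-insertion. Counterexample: $\sigma = 312$ in one-line notation. At step $3$ the letter $2$ is simply appended to the first row (the tableau before insertion has first row $1$ and second row $3$), so the new box lies in row $1$; but $a_3 = 2$ because of the decreasing subsequence $3,2$. The correct classical refinement of Schensted's theorem concerns columns: the column in which the inserted letter comes to rest \emph{in the first row} equals the length of the longest increasing subsequence ending at that letter; the terminal box of the bumping path is a different statistic altogether. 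The failure is not cosmetic: $a(\sigma)$ does not determine $Q(\sigma)$ at all. For $\sigma = 1423$ and $\tau = 2413$ one computes $a(\sigma) = a(\tau) = (1,1,2,2)$, yet $Q(\sigma)$ has first row $1,2,4$ and second row $3$, while $Q(\tau)$ has first row $1,2$ and second row $3,4$. So the chain ``$a(\sigma)$ recovers $Q(\sigma)$, $a(\sigma^{-1})$ recovers $P(\sigma)$, Robinson--Schensted finishes'' collapses, and the injectivity of your map $\Phi$ is left unproven (it happens to hold for $n \le 4$ by direct check, but nothing in your argument establishes it).

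There are two standard repairs. (i) If you want the Robinson--Schensted route: for $d$-good $\sigma$ the common shape of $(P(\sigma),Q(\sigma))$ has at most $d-1$ rows, since the number of rows equals the length of the longest decreasing subsequence (Schensted). A standard Young tableau with at most $d-1$ rows \emph{is} determined by its row sequence $(r_1,\dots,r_n) \in \{1,\dots,d-1\}^n$, where $r_k$ is the row containing the entry $k$---this is the statistic you wanted, but it must be read off the tableau itself (for $Q$, it is the row of the box created at step $k$), not identified with $a_k$. Then $\sigma \mapsto (\text{row sequence of } P(\sigma), \text{row sequence of } Q(\sigma))$ is an honest injection into a set of size $(d-1)^{2n}$. (ii) The elementary repair: keep $a(\sigma)$, whose level sets $S_j = \{i : a_i = j\}$ partition the positions into at most $d-1$ increasing subsequences, but pair it with $g\colon v \mapsto a_{\sigma^{-1}(v)}$, the class of the \emph{position carrying the value} $v$, rather than with $a(\sigma^{-1})$. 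From $(a,g)$ one reconstructs $\sigma$ immediately: $\sigma$ must carry $S_j$ onto $g^{-1}(j)$ by the unique order-preserving bijection, since $\sigma$ is increasing on $S_j$. Note that $g \neq a(\sigma^{-1})$ in general (already for $\sigma = 231$ one has $g = (2,1,1)$ while $a(\sigma^{-1}) = (1,2,2)$), which is exactly the trap your final paragraph circles: you are right that a second, dual statistic is needed, but the one that makes the reconstruction work is $g$, not $a(\sigma^{-1})$.
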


Let $A$ be an associative $k$-algebra with $k$ a field, and $k\lb X \rb =k \lb x_1, x_2, \dotsc \rb$ be the free $k$-algebra on countably many variables. The set $P_n=  \spn\{ x_{\sg(1)}\cdots x_{\sg(n)} \mid \sg \in S_n\}$ is the subspace  of multilinear polynomials of degree $n$. A monomial $x_{\sg}=x_{\sg(1)} \cdots x_{\sg(n)} \in P_n$ is called $d$-good if the associated permutation $\sg$ is $d$-good, and similarly for $d$-bad. 

The algebra $A$ is a polynomial identity algebra (PI-algebra) if there exists some $f=f(x_1,\dotsc,x_m) \in k\lb X\rb$, such that $f(a_1,\dotsc , a_m) =0$ for all  $a_i \in A$.  
Denote by $\Id(A)$ the $T$-ideal of identities of $A$. The $n$-th \emph{codimension} of $A$ is given by $c_n(A) = \dim \frac{P_n}{P_n \cap \Id(A)}$. Note that $A$ satisfies a polynomial identity of degree $n$ as soon as $c_n(A) < n!$.  

\begin{thm}\label{classicbound} Let $A$ be a PI-algebra satisfying an identity of degree $d$. Then for $n\geq d$ we have $c_n(A) \leq (d-1)^{2n}$, i.e. $P_n$ is spanned (modulo the identities of $A$) by $d$-good monomials.
\end{thm}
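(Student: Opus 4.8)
The dimension bound will follow at once from the spanning statement, so the plan is to prove that the $d$-good monomials span $P_n$ modulo $\Id(A)$; since distinct permutations give distinct monomials, the number of $d$-good monomials equals the number of $d$-good permutations, which is at most $(d-1)^{2n}$ by the Lemma, giving $c_n(A) = \dim P_n/(P_n\cap\Id(A)) \le (d-1)^{2n}$.

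First I would put the identity in a normal form. By the standard multilinearization procedure, an identity of degree $d$ yields a multilinear identity $f = \sum_{\sg\in S_m}\al_\sg\, x_{\sg(1)}\cdots x_{\sg(m)} \in \Id(A)$ of some degree $m\le d$ with not all $\al_\sg$ zero. Since $\Id(A)$ is a $T$-ideal it is stable under relabeling the variables (applying a variable permutation is an endomorphism of $k\lb X\rb$), so I may assume the coefficient $\al_e$ of the identity permutation is nonzero; dividing through by $\al_e$ produces the congruence $x_1x_2\cdots x_m \equiv \sum_{\sg\ne e}\al_\sg\, x_{\sg(1)}\cdots x_{\sg(m)} \pmod{\Id(A)}$. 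Because $m\le d$, a monomial with no decreasing subsequence of length $m$ has none of length $d$, so every $m$-good monomial is $d$-good; it therefore suffices to span by $m$-good monomials.

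The engine of the proof is a reduction that rewrites any $m$-bad monomial as a combination of strictly smaller ones, for a suitable well-order. I would order the monomials of $P_n$ lexicographically by their index sequences $(\sg(1),\dotsc,\sg(n))$, read left to right with larger index counting as larger; on the finite set $S_n$ this is a well-order whose least element $x_1x_2\cdots x_n$ is $d$-good. Now take an $m$-bad monomial $x_\sg$ and fix a strictly decreasing subsequence of its indices $a_1>a_2>\cdots>a_m$ occurring at positions $i_1<\cdots<i_m$. Write $x_\sg = u\,M_1M_2\cdots M_m$, where $u$ is the prefix before position $i_1$ and $M_j$ is the block running from position $i_j$ up to (but excluding) $i_{j+1}$ (with $M_m$ the final segment); thus $M_j$ begins with $x_{a_j}$. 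Substituting $x_j\mapsto M_j$ into the normalized identity — legitimate since $\Id(A)$ is a $T$-ideal — and left-multiplying by $u$ — legitimate since $\Id(A)$ is an ideal — gives $x_\sg \equiv \sum_{\tau\ne e}\al_\tau\, u\,M_{\tau(1)}\cdots M_{\tau(m)} \pmod{\Id(A)}$. For each $\tau\ne e$, letting $j$ be least with $\tau(j)\ne j$ forces $\tau(j)>j$ and hence $a_{\tau(j)}<a_j$; the monomial $u\,M_{\tau(1)}\cdots M_{\tau(m)}$ then agrees with $x_\sg$ through the first $j-1$ blocks and carries a strictly smaller index at the head of the $j$-th block, so it is strictly smaller than $x_\sg$ in the chosen order.

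With the reduction in hand, a descending induction on the well-order completes the argument: the least monomial is $d$-good, and any $m$-bad monomial is congruent modulo $\Id(A)$ to a combination of strictly smaller monomials, each of which is a combination of $m$-good (hence $d$-good) monomials by the inductive hypothesis; the process terminates because the order is well-founded. I expect the main obstacle to be precisely this reduction step — organizing the block decomposition so that the substitution into the multilinear identity is valid, and verifying that the decreasing arrangement $a_1>\cdots>a_m$ of the block heads is exactly what forces every resulting monomial to drop in the lexicographic order.
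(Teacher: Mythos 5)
Your proof is correct and takes essentially the same route as the paper: both reduce to a multilinear identity with unit coefficient on the identity permutation, decompose a bad monomial into blocks headed by the terms of a decreasing subsequence, substitute the blocks into the identity (left-multiplying by the prefix), and observe that every rearranged monomial drops strictly in the dictionary order, concluding by well-founded induction (which the paper phrases as a minimal counterexample). Your explicit multilinearization to degree $m\le d$ and the relabeling argument for $\al_e\neq 0$ merely spell out what the paper compresses into ``we may assume.''
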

\begin{proof} We may assume that $A$ satisfies an identity of the form $$x_1 \cdots x_d = \sum_{1 \neq \tau \in S_d} \al_{\tau}x_{\tau(1)}\cdots x_{\tau(d)}. \quad \quad \quad(*)$$ Pick $x_{\sg}=x_{\sg(1)}\cdots x_{\sg(n)}$, minimal in the dictionary order on monomials, such that $x_{\sg}$ is not a linear combination of $d$-good monomials. In particular $\sg$ must be $d$-bad. By definition there are indices $1 \leq i_1 < \cdots < i_d \leq n$ with $\sg(i_1) > \cdots > \sg(i_d)$. We decompose $x_{\sg}$ by setting $w_0 = x_{\sg(1)} \cdots x_{\sg(i_1 -1)}$, $w_1 = x_{\sg(i_1)} \cdots x_{\sg(i_2 -1)}$, $\dotsc, w_d = x_{\sg(i_d)} \cdots x_{\sg(n)}$. Clearly $x_{\sg} = w_0 \cdots w_d$, and for any $1\neq \tau \in S_d$ we have $w_0w_{\tau(1)} \cdots w_{\tau(d)} < w_0w_1\cdots w_d= x_{\sg}$ in the dictionary order. By minimality this means that each $w_0w_{\tau(1)} \cdots w_{\tau(d)}$ is a linear combination of $d$-good monomials. Left multiply $(*)$ by the variable $x_0$, and use the resulting identity  to conclude that $x_{\sg}$ must also be a linear combination of $d$-good monomials, a contradiction.   
\end{proof}

As an application  we recall Regev's theorem regarding the tensor product of PI-algebras.  Regev's theorem demonstrates how codimension arguments are used to establish the existence of an identity for a particular algebra. It follows almost immediately from the following, which we state without proof. 

\begin{thm} Let $A$ and $B$ be PI-algebras. We have that $c_n(A \otimes B) \leq  c_n(A)c_n(B)$. 
\end{thm}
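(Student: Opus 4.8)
The plan is to realize the codimension purely in terms of the multilinear space $P_n$ and to build a single linear map out of $P_n$ whose image lives in a space of dimension exactly $c_n(A)c_n(B)$ and whose kernel is precisely $P_n \cap \Id(A \otimes B)$. Write $V = P_n/(P_n \cap \Id(A))$ and $W = P_n/(P_n \cap \Id(B))$, with projections $\pi_A : P_n \to V$ and $\pi_B : P_n \to W$, so that $\dim V = c_n(A)$ and $\dim W = c_n(B)$. First I would define the \emph{diagonal} map $\Delta : P_n \to P_n \otimes_k P_n$ on the monomial basis by $\Delta(x_\sg) = x_\sg \otimes x_\sg$ and extending linearly, and then set $T = (\pi_A \otimes \pi_B)\circ \Delta : P_n \to V \otimes W$. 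Since $\dim(V \otimes W) = c_n(A)c_n(B)$, everything reduces to the single claim that $\ker T = P_n \cap \Id(A \otimes B)$; granting this, $T$ descends to an injection $P_n/(P_n \cap \Id(A\otimes B)) \hookrightarrow V \otimes W$ and the dimension inequality follows immediately.

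The key step, and the one I expect to be the main obstacle, is the identification of $\ker T$. The engine is the multiplicativity of the tensor product: for a multilinear $f = \sum_\sg \al_\sg x_\sg$ and simple tensors $a_i \otimes b_i$ one computes
$$ f(a_1 \otimes b_1, \dotsc, a_n \otimes b_n) = \sum_{\sg \in S_n} \al_\sg \big(a_{\sg(1)}\cdots a_{\sg(n)}\big) \otimes \big(b_{\sg(1)}\cdots b_{\sg(n)}\big), $$
using that $(a\otimes b)(a'\otimes b') = aa' \otimes bb'$. I would then pair this against a product functional $\phi \otimes \chi$ with $\phi \in A^*$, $\chi \in B^*$, obtaining $\sum_\sg \al_\sg \phi(a_{\sg(1)}\cdots a_{\sg(n)})\,\chi(b_{\sg(1)}\cdots b_{\sg(n)})$, which is exactly $(\mu \otimes \nu)(\Delta f)$ for the evaluation functionals $\mu(x_\sg) = \phi(a_{\sg(1)}\cdots a_{\sg(n)})$ and $\nu(x_\sg) = \chi(b_{\sg(1)}\cdots b_{\sg(n)})$ on $P_n$.

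Now I would invoke two standard facts. First, because simple tensors span $A \otimes B$ and product functionals $\phi \otimes \chi$ separate its points, the multilinear polynomial $f$ lies in $\Id(A \otimes B)$ if and only if $(\mu \otimes \nu)(\Delta f) = 0$ for all such $\mu, \nu$. Second, the evaluation functionals $\mu$ of the above form span the annihilator $(P_n \cap \Id(A))^{\perp} \subseteq P_n^*$, which is canonically $V^*$, and likewise for $B$; since $P_n$ is finite dimensional, $(P_n \cap \Id(A))^{\perp} \otimes (P_n \cap \Id(B))^{\perp}$ is exactly the annihilator of $\ker(\pi_A \otimes \pi_B)$. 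Combining these, $f \in \Id(A \otimes B)$ if and only if $\Delta f$ is killed by every functional in that annihilator, i.e. if and only if $(\pi_A\otimes\pi_B)(\Delta f) = T(f) = 0$. This establishes $\ker T = P_n \cap \Id(A \otimes B)$.

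Finally I would assemble the pieces: the factorization of $T$ through the quotient yields an injective linear map $P_n/(P_n \cap \Id(A \otimes B)) \hookrightarrow V \otimes W$, whence $c_n(A\otimes B) = \dim P_n/(P_n \cap \Id(A\otimes B)) \leq \dim(V \otimes W) = c_n(A)c_n(B)$. The only delicate points to verify carefully are that it genuinely suffices to test the identity on simple tensors and product functionals (a multilinearity-plus-density argument) and the annihilator computation $(\ker(\pi_A \otimes \pi_B))^{\perp} = (P_n\cap\Id(A))^\perp \otimes (P_n \cap \Id(B))^\perp$, which is where finite-dimensionality of $P_n$ is essential.
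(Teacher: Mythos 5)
Your proof is correct and complete. The paper in fact states this theorem without proof (it is Regev's classical codimension inequality, deferred to the literature), and your argument is essentially the standard one: the diagonal map $x_\sigma \mapsto \bar{x}_\sigma \otimes \bar{x}_\sigma$ into $P_n/(P_n\cap\Id(A)) \otimes P_n/(P_n\cap\Id(B))$, with the kernel identified via the expansion $f(a_1\otimes b_1,\dotsc,a_n\otimes b_n) = \sum_\sigma \alpha_\sigma (a_{\sigma(1)}\cdots a_{\sigma(n)})\otimes(b_{\sigma(1)}\cdots b_{\sigma(n)})$. The two delicate points you flag are exactly the right ones and both go through: multilinearity reduces testing the identity to simple tensors, product functionals $\phi\otimes\chi$ separate points of $A\otimes B$ over any field, and finite-dimensionality of $P_n$ justifies the double-annihilator step; note also that only the inclusion $\ker T \subseteq P_n\cap\Id(A\otimes B)$ is actually needed for the inequality, so your proof of the reverse inclusion is a (correct) bonus showing $c_n(A\otimes B)=\dim\mathrm{im}\,T$ for this particular $T$.
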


\begin{thm}[Regev] If $A$ and $B$ are PI-algebras, then so is $A \otimes B$. 
\end{thm}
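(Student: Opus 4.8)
The plan is to deduce Regev's theorem directly from the preceding codimension-product inequality together with the observation recorded just before the statement, namely that an algebra $A$ satisfies a polynomial identity of degree $n$ as soon as $c_n(A) < n!$. So I would first invoke the hypothesis that $A$ and $B$ are PI-algebras to obtain explicit bounds on their codimension sequences. Concretely, if $A$ satisfies an identity of degree $d_A$ and $B$ satisfies an identity of degree $d_B$, then Theorem \ref{classicbound} gives $c_n(A) \leq (d_A - 1)^{2n}$ and $c_n(B) \leq (d_B-1)^{2n}$ for all sufficiently large $n$.

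Next I would combine these with the multiplicative bound to get
\begin{equation*}
c_n(A \otimes B) \leq c_n(A)\,c_n(B) \leq (d_A-1)^{2n}(d_B-1)^{2n} = \bigl((d_A-1)(d_B-1)\bigr)^{2n}.
\end{equation*}
The right-hand side is exponential in $n$, with base $c := \bigl((d_A-1)(d_B-1)\bigr)^2$ a fixed constant, whereas $n!$ grows super-exponentially. Hence for all sufficiently large $n$ we have $c^n < n!$, and therefore $c_n(A \otimes B) < n!$.

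Finally I would apply the remark that $c_n(A\otimes B) < n!$ forces $A \otimes B$ to satisfy a (multilinear) identity of degree $n$, which is exactly the assertion that $A \otimes B$ is a PI-algebra. This completes the argument.

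I do not anticipate a serious obstacle: the entire content is already packaged in the two cited theorems and the elementary fact that exponential growth is eventually dominated by factorial growth. The only point requiring a moment's care is making the asymptotic comparison $c^n < n!$ precise and noting that it holds for all $n$ beyond some threshold depending only on the fixed base $c$; this follows from, for instance, $n! \geq (n/e)^n$, so that $c^n/n! \leq (ce/n)^n \to 0$. One should also be slightly careful that the bound $c_n(A) \leq (d_A-1)^{2n}$ is only asserted for $n \geq d_A$, but since we only need the conclusion for large $n$ this causes no difficulty.
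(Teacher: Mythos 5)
Your proposal is correct and follows essentially the same route as the paper: combine the inequality $c_n(A\otimes B)\leq c_n(A)c_n(B)$ with Theorem \ref{classicbound} to obtain an exponential bound $\bigl((d_A-1)(d_B-1)\bigr)^{2n}$, then note that this is eventually beaten by $n!$, which forces an identity. Your extra remarks (the Stirling-type estimate and the caveat about $n\geq d_A$) only make explicit what the paper leaves implicit.
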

\begin{proof} Assume that $A$ satisfies an identity of degree $d_1$, and  $B$ an identity of degree $d_2$. By the above and Theorem \ref{classicbound}, we have that $c_n(A\otimes B) \leq c_n(A)c_n(B) \leq (d_1 -1)^{2n} (d_2 - 1)^{2n}$. Let $k_1 = (d_1 -1)^2$ and $k_2 = (d_2 -1 )^2$, so that  $c_n(A\otimes B) \leq (k_1k_2)^n$. There exists an $m$ such that $(k_1k_2)^m  < m!$, and so $A \otimes B$ satisfies an identity of degree $m$.    
\end{proof}


\subsection{Geometry of $S_n$}\label{sec-geomSn}
A reference for this material is  \cite{E+}. We denote by $\bar{n}$ the set $\bar{n} = \{1,\dotsc, n\}$. For any $\sg \in S_n$, construct the descent set   $$R_{\sg} = \{ (i,j) \in \bar{n} \times \bar{n} \mid i < j \text{ and } \sg(j)< \sg(i) \}.$$ This is also known in the literature as the set of inversions. It should be clear that $R_{\sg}$ is uniquely determined by $\sg$. Furthermore, a subset $R \subset \bar{n} \times \bar{n}$ comes from a permutation precisely when:
\begin{enumerate}
	\item $(i,j)\text{ and } (j,k) \in R \implies (i,k) \in R$, and
	\item $(i,k) \in R \implies (i,j) \in R \text{ or } (j,k) \in R$ for all $i<j<k$.
\end{enumerate}

Take the generating set of $S_n$ to be $T= \{t_1, \dotsc, t_{n-1}\}$ where $t_i = (i,i+1)$. We may form the Cayley graph $\Gamma=\Gamma(S_n,T)$, which induces a metric on $S_n$ called the word metric. More specifically for $\sg , \tau \in S_n$, $d(\sg,\tau)$ is the length of the shortest path in $\Gamma$ from $\sg$ to $\tau$, i.e. the shortest expression of $\sg\inv\tau$ in terms of the generators. We take the ball of radius $K$ in $S_n$ to  be $B(K) = \{ \sg \in S_n \mid d(\sg,1) < K \}$; the complement of such a ball is $\widehat{B}(K) = \{\sg \in S_n \mid d(\sg,1)\geq K\}$. 

We use the symbol $\#$ for the cardinality of a set, so the number of elements in $B(K)$ is denoted $\# B(K)$, and similarly for $\# \widehat{B}(K)$. Finally, we denote the size of $\sg \in S_n$ by $|\sg| = d(\sg, 1)$, and remark that $|\sg| =  \# R_{\sg}$, the number of pairs in the descent set.  


\subsection{Some convenient notation}\label{sec-notation} As above, we use the shorthand $x_{\sg}$ for the monomial $x_{\sg(1)} \cdots x_{\sg(n)} \in P_n$. We wish to consider submonomials of $x_{\sg}$ which are often referred to as subwords. Choose another letter for such a subword, for example $w$, so that $w = x_{\sg(i)} \cdots x_{\sg(j)}$ for some $1\leq i \leq j \leq n$. The \emph{length} of this subword is $l(w)=j-i + 1$. We say that another subword $u$ precedes $w$, denoted $u\preceq w$, if $u = x_{\sg(i)} \cdots x_{\sg(j')}$ for $j' \leq j$.

For a decomposition of $x_{\sg}$ into subwords $x_{\sg}=w_1 \cdots w_k$ with $k\leq n$, we can act on $x_{\sg} $ by $\tau \in S_k$. We denote such an action by $x_{\tau(\sg)}= w_{\tau(1)} \cdots w_{\tau(k)}$, where $\sg':=\tau(\sg) \in S_n$ is the resulting permutation  of the indices. Finally, for $i \in \bar{n}$ we write $i \in w_j$  if $\sg(i)$ shows up as an index in $w_j$. 


\section{Results}

The motivation for our  results comes from the following simple observation.

\begin{lem}\label{lem-dgoodsize} Let $\sg \in S_n$. If $\sg \in B(\frac{d(d-1)}{2})$, then $\sg$ is $d$-good. 
\end{lem}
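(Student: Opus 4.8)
The plan is to prove the contrapositive: I will show that if $\sg$ is $d$-bad, then $\sg \notin B(\frac{d(d-1)}{2})$, i.e. $|\sg| \geq \frac{d(d-1)}{2}$. The whole argument rests on the identity $|\sg| = \# R_{\sg}$ recorded at the end of Section~\ref{sec-geomSn}, which lets me convert a statement about word length into a purely combinatorial count of inversions. This is really the point of the lemma: it is the bridge between the metric picture of $S_n$ and the order-theoretic notion of $d$-goodness.

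So suppose $\sg$ is $d$-bad. By definition there exist indices $1 \leq i_1 < \cdots < i_d \leq n$ with $\sg(i_1) > \cdots > \sg(i_d)$, i.e. a decreasing subsequence of length $d$. The observation I would exploit is that this single subsequence already forces a large number of inversions all at once: for any pair of positions $a < b$ chosen from $\{1, \dotsc, d\}$, we have both $i_a < i_b$ and $\sg(i_b) < \sg(i_a)$, so $(i_a, i_b) \in R_{\sg}$ directly from the definition of the descent set. Since distinct pairs $\{a,b\}$ give distinct pairs $(i_a,i_b)$, these contribute $\binom{d}{2}$ genuinely different elements of $R_{\sg}$.

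Counting them gives $|\sg| = \# R_{\sg} \geq \binom{d}{2} = \frac{d(d-1)}{2}$, which is exactly the negation of $\sg \in B(\frac{d(d-1)}{2})$, and the contrapositive is complete. I do not anticipate a genuine obstacle here; the content is the clean fact that a decreasing subsequence of length $d$ induces a full ``clique'' of $\binom{d}{2}$ inversions. The only points requiring care are cosmetic: confirming that the pairs $(i_a,i_b)$ are distinct so the inequality $\# R_{\sg} \geq \binom{d}{2}$ is legitimate, and checking that the strict-versus-nonstrict inequalities line up correctly with the convention $B(K) = \{\sg \mid d(\sg,1) < K\}$, so that $|\sg| \geq \frac{d(d-1)}{2}$ indeed places $\sg$ outside the ball.
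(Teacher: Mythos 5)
Your proof is correct and follows essentially the same route as the paper: both argue the contrapositive, observing that a decreasing subsequence of length $d$ forces every pair $(i_a,i_b)$ with $a<b$ into the descent set $R_{\sg}$, yielding at least $\binom{d}{2} = \frac{d(d-1)}{2}$ inversions and hence $\sg \in \widehat{B}(\frac{d(d-1)}{2})$. The only cosmetic difference is that the paper tallies the inversions as $\sum_{i=1}^{d-1} i$ rather than as a single binomial coefficient.
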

\begin{proof} We establish the contrapositive. If $\sg$ is $d$-bad, there are indices $i_1 < \cdots < i_d$ with $\sg(i_1) > \cdots > \sg(i_d)$. For each $j=1,\dotsc ,d-1$ there is a  pair $(i_j, i_l) \in R_{\sg}$ for every $l>j$, for a total of at least $\sum_{i=1}^{d-1} i = \frac{d(d-1)}{2}$  pairs in $R_{\sg}$. Hence, $\sg \in \widehat{B}(\frac{d(d-1)}{2})$.  
\end{proof}

Both size and ``$d$-badness'' provide  a  measure  of the extent to which a given permutation is out of order. 
Ideally one would like to state something like the following: if $A$ is a PI-algebra satisfying an identity of degree $d$, then $P_n$ is spanned by  $d$-good monomials $x_{\sg}$ with $\sg \in B(K_n)$, for some sequence of radii $K_n$. If we can make $K_n$ small enough, i.e. so that there are $d$-good permutations outside of $B(K_n)$, this would automatically provide an improved bound on the codimensions compared to the classic one.

The sticking point is that we require a decomposition of the monomial $x_{\sg}$ - as in the proof of Theorem \ref{classicbound} 
- that is somehow related to  $|\sg|$. There is a natural  such decomposition called \emph{left greedy form}, which we describe below in Sections \ref{sec-lgf} and \ref{sec-colgf}. However, there is no clear relationship between the $d$-good/bad condition and left greedy form. 
We are still able to obtain a bound on the codimensions, but it is not as small as one might hope.  


\subsection{Left greedy form}\label{sec-lgf}
\begin{definition} Let $\sg \in S_n$ with descent set $R_{\sg}$, and $x_{\sg} \in P_n$ the associated monomial. We define the \emph{initial chunk} $c$ of $x_{\sg}$ as follows. If $R_{\sg}= \emptyset$ (i.e. $\sg = 1$) then $c$ is the empty word.  Otherwise $c$ is the subword $x_{\sg}=w_0cw_1$ with $c=x_{\sg(i_0)} \cdots x_{\sg(j_0)}$, and $i_0, j_0 \in \bar{n}$ satisfying:
	\begin{enumerate}
		\item $(i_0,j)\in R_{\sg}$ for some $j\in \bar{n}$, and  $(i_0,j)$ is minimal in $R_{\sg}$ with respect to the dictionary order.
		\item $(i,j_0) \in R_{\sg}$ for some $i\in \bar{n}$,
		\item for all $i_0 \leq i\leq j_0$,  if $j > j_0$ then $(i,j) \notin R_{\sg}$.
	\end{enumerate}
\end{definition}

We remark that $w_0$ or $w_1$ (or both) may be empty words. For example the element $\delta \in S_n$ of maximal size $|\delta| =$$n\choose 2$ has initial chunk $x_{\delta}=x_{\delta(1)} \cdots x_{\delta(n)}=x_nx_{n-1}\cdots x_1 = c$. The following lemma describes the algorithm for finding the initial chunk. 

\begin{lem}\label{lem-constructlgf} For any $\sg \in S_n$ and $x_{\sg} \in P_n$, the initial chunk $c$ exists. 
\end{lem}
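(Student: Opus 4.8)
The plan is to dispose of the trivial case first and then construct $j_0$ by a monotone greedy procedure, with $i_0$ being forced by condition (1). If $R_{\sg} = \emptyset$ then by definition $c$ is the empty word and there is nothing to prove, so I assume $R_{\sg} \neq \emptyset$. Since $R_{\sg}$ is a finite nonempty subset of $\bar{n} \times \bar{n}$ and the dictionary order is a total order, $R_{\sg}$ has a unique minimal element $(i_0, j^{*})$; this fixes $i_0$ and establishes condition (1) immediately. Concretely $i_0 = \min\{ i \in \bar{n} : (i,j) \in R_{\sg} \text{ for some } j \}$, and it remains only to produce a $j_0$ satisfying conditions (2) and (3).

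Next I would build $j_0$ as the stable value of an increasing iteration that ``closes up'' the interval to the right. Set
$$ j^{(0)} = \max\{ j : (i_0, j) \in R_{\sg} \}, $$
which exists and satisfies $j^{(0)} > i_0$ because condition (1) guarantees at least one inversion with first coordinate $i_0$ (and any such inversion has $j > i_0$). Given $j^{(k)}$, define
$$ j^{(k+1)} = \max\{ j : (i,j) \in R_{\sg} \text{ for some } i_0 \leq i \leq j^{(k)} \}. $$
Each of these sets is nonempty (it contains the inversion that produced $j^{(k)}$), so the maximum is defined, and since the range of indices $i$ over which we maximize only grows with $k$, the sequence $j^{(0)} \leq j^{(1)} \leq \cdots$ is nondecreasing. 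Being bounded above by $n$, it stabilizes, so there is a $k$ with $j^{(k+1)} = j^{(k)}$; I set $j_0 := j^{(k)}$.

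Finally I would verify the three defining conditions. Condition (1) holds by the choice of $i_0$. Every $j^{(k)}$ is, by construction, the second coordinate of some inversion, so $j_0$ is as well, giving condition (2). For condition (3), stabilization says precisely that $\max\{ j : (i,j) \in R_{\sg},\ i_0 \leq i \leq j_0 \} = j_0$, so no inversion $(i,j) \in R_{\sg}$ with $i_0 \leq i \leq j_0$ can satisfy $j > j_0$, which is exactly the contrapositive of condition (3). Since $j_0 \geq j^{(0)} > i_0$, the resulting subword $c = x_{\sg(i_0)} \cdots x_{\sg(j_0)}$ is a genuine nonempty subword, completing the construction.

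The step I expect to require the most care is confirming that stabilization yields condition (3) over the \emph{entire} interval $[i_0, j_0]$, rather than some weaker closure relative only to the indices visited at earlier stages of the iteration. The monotonicity of the index sets $\{ i : i_0 \leq i \leq j^{(k)} \}$ is what guarantees that the stabilized maximum genuinely ranges over all of $[i_0, j_0]$; making this dependence explicit, and thereby ruling out a ``premature'' stabilization, is the crux of the argument.
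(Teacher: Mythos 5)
Your proof is correct and takes essentially the same approach as the paper: both fix $i_0$ via the dictionary-minimal inversion and then greedily enlarge the candidate $j_0$ until no inversion $(i,j)$ with $i_0 \leq i \leq j_0$ has $j > j_0$, with termination by finiteness. Your version with maxima and an explicit monotone-stabilization argument simply makes rigorous what the paper's proof asserts informally ("the sequence $j_0^1, j_0^2, \dotsc$ must terminate, since the monomial is of finite length"), including the point, glossed over there, that stabilization enforces condition (3) over the whole interval $[i_0, j_0]$.
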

\begin{proof} If $\sg =1$ then we are done. Otherwise by assumption there is some $(i, j) \in R_{\sg}$ which is minimal in the dictionary order,  so take $i_0 =i$. We may set $j=j_0^1$ to be our candidate for  $j_0$.  For all  $i'$ with $i_0 \leq i' \leq j_0^1$, check whether there exists some $j' > j_0^1$ with $(i', j') \in R_{\sg}$. If no such $j'$ exists, then we have $j_0^1=j_0$. Otherwise, update $j_0^2=j'$, and repeat the process for $i''$ with $i' \leq i'' \leq j_0^2$. The sequence $j_0^1, j_0^2, \dotsc $ must terminate, since the monomial is of finite length.   
\end{proof}

Given $x_{\sg}=w_0cw_1$, we can proceed to find the initial chunk of $w_1$. Iterating this process gives a decomposition $x_{\sg}=w_0c_1 w_1c_2 \cdots w_{k-1}c_kw_k$, where $c_i$ is the initial chunk of $w_{i-1}\cdots c_kw_k$. Again, note that  $w_j$ may be empty for any $0\leq j\leq k$, however  we assume that $c_j$ is always nonempty (provided $\sg\neq 1$).  In the spirit of \cite{E+}, we refer to this decomposition as the \emph{left greedy form} of $x_{\sg}$. By construction, left greedy form satisfies two useful properties, which we now list.

\begin{prop}\label{prop-lgffacts} Let $x_{\sg}=w_0c_1 \cdots w_{k-1}c_kw_k \in P_n$ be in left greedy form.
	\begin{enumerate}
		\item If $i \in w_l$ for some $0 \leq l \leq k$, then $(i,j) \notin R_{\sg}$ for all $j \in \bar{n}$. 
		\item If $(i,j) \in R_{\sg}$, then $i, j \in c_{\sg}^l$ for some $1\leq l \leq k$. 
	\end{enumerate}
\end{prop}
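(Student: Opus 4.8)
The plan is to prove both parts simultaneously by strong induction on the length $n$ of the monomial $x_{\sg}$, exploiting the recursive definition of left greedy form. The key preliminary observation I would record is a \emph{suffix-restriction} fact: passing to the tail of $x_{\sg}$ after the first chunk preserves the relevant combinatorial data. Writing $x_{\sg}=w_0 c_1 w'$ with $c_1 = x_{\sg(i_0)}\cdots x_{\sg(j_0)}$ and $w' = x_{\sg(j_0+1)}\cdots x_{\sg(n)}$, the descent set of $w'$ is exactly $\{(i,j)\in R_{\sg} : i,j > j_0\}$, since the relative order of the letters $\sg(j_0+1),\dots,\sg(n)$ is unchanged. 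Consequently the algorithm applied to the standalone word $w'$ returns precisely $w_1 c_2 \cdots w_{k-1} c_k w_k$, so $w'$ is a shorter monomial in left greedy form and the inductive hypothesis applies to it.

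For the base case $\sg = 1$ both statements are vacuous, as $R_{\sg} = \emptyset$. In the inductive step I would first dispose of part (1). The gap $w_0$ consists of the positions $1,\dots,i_0 - 1$ lying to the left of the start $i_0$ of $c_1$; since $(i_0,\cdot)$ is the dictionary-minimal element of $R_{\sg}$ by condition (1) of the initial chunk, no inversion can begin at a position $< i_0$, so part (1) holds on $w_0$. Every other gap $w_l$ with $l\geq 1$ is a gap of $w'$ and consists of positions $i > j_0$; any $(i,j)\in R_{\sg}$ with such an $i$ automatically has $j > i > j_0$, hence lies in the descent set of $w'$, and the inductive hypothesis (part (1) for $w'$) forbids $i$ from being a left endpoint there. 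This yields part (1) for $\sg$.

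For part (2), take $(i,j)\in R_{\sg}$ and split on the location of $i$ relative to $c_1$. The case $i < i_0$ cannot occur, again by dictionary-minimality of $(i_0,\cdot)$. If $i_0 \leq i \leq j_0$, then condition (3) of the initial chunk applied to the pair $(i,j)$ rules out $j > j_0$, so $j \leq j_0$; combined with $j > i \geq i_0$ this places both $i$ and $j$ inside $c_1$. Finally, if $i > j_0$ then $j > i > j_0$ as well, so $(i,j)$ is an inversion of $w'$, and the inductive hypothesis (part (2) for $w'$) puts $i$ and $j$ in a common chunk $c_l$ with $l \geq 2$. These three cases are exhaustive, completing the induction.

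I expect the main obstacle to be bookkeeping rather than conceptual: one must state and verify the suffix-restriction fact carefully, in particular that the algorithm's output on the standalone word $w'$ genuinely is the tail of the left greedy form of $x_{\sg}$, and that condition (3) for $c_1$ is read as a statement about $R_{\sg}$ itself (as written in the definition) rather than about the restricted descent set. Once that compatibility between $\sg$ and $w'$ is pinned down, both parts reduce cleanly to the defining properties (1) and (3) of the initial chunk.
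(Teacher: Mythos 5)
Your proof is correct and takes essentially the same route as the paper: both arguments reduce parts (1) and (2) to conditions (1) and (3) in the definition of the initial chunk, applied at the appropriate suffix $w_l c_{l+1}\cdots c_k w_k$ of $x_{\sigma}$. Your induction, with the suffix-restriction fact stated explicitly, is just a careful formalization of the paper's two-line appeal to ``$c_{l+1}$ being the initial chunk'' of that suffix (and it has the minor merit of also covering the last gap $w_k$, where no chunk $c_{l+1}$ exists, a case the paper glosses over).
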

\begin{proof}
	For 1, assume that $i \in w_l$  for some $l$ and that $(i,j) \in R_{\sg}$ for some $j \in \bar{n}$. This contradicts $c_{l+1}$ being the initial chunk of $w_{l}c_{l+1} \cdots c_kw_k$. Part 2 follows from part 1, and the definition of the initial chunk. 
\end{proof}

\begin{definition} Let $x_{\sg}=w_0c_1 \cdots c_kw_k$ be in left greedy form. 
	We say that a decomposition of $x_{\sg}$ \emph{preserves chunks} if it is of the form $$x_{\sg} = y^0_1 \cdots y^0_{m_0}c_1'\cdots  c_{k'}'y^{k'}_1 \cdots y^{k'}_{m_{k'}},$$ where 
\begin{enumerate} 
	\item $k'\leq k$,
	\item $c_i \preceq c_i'$ for all $i =1,\dotsc , k'$, and
	\item $y^{k'}_1 \cdots y_{m_{k'}}^{k'}$ is the empty word if $k' <k$.
\end{enumerate}
\end{definition}
Note that for such a decomposition and for $i<k'$, $l(c_i')$ is  is minimal when  $c_i' = c_i$, and maximal when  $c_i'=c_iw_i$. In other words, the decomposition distinguishes all the initial chunks, except possibly at the end of the word. 
We can act on a decomposition preserving chunks by elements of $S_{k' + m}$, where $m=\sum m_j$, as described in Section \ref{sec-notation}. More specifically, relabel the indices to be $\{1,\dotsc, k'+m\}$ and then apply $\tau$ to the indices.  The following corollary to Proposition \ref{prop-lgffacts} is key to the proof of our main theorem.

\begin{cor}\label{cor-lgfaction} Let $x_{\sg} = y^0_1 \cdots y^0_{m_0}c_1' \cdots c_{k'}'y^{k'}_1 \cdots y^{k'}_{m_{k'}} \in P_n$ be a decomposition preserving chunks, and assume that $x_{\sg} \neq c_1$. Take $m=\sum m_j$. For any $1\neq \tau \in S_{k'+m}$, we have $|\sg'| > |\sg|$ where $x_{\sg'} =x_{\tau(\sg)}$.  
\end{cor}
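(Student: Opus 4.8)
The plan is to translate the claim entirely into inversion-counting, using the identity $|\sigma| = \# R_\sigma$ recorded in Section \ref{sec-geomSn}. I would regard the chunk-preserving decomposition $x_\sigma = y^0_1 \cdots y^0_{m_0} c_1' \cdots c_{k'}' y^{k'}_1 \cdots y^{k'}_{m_{k'}}$ as a partition of the word into $N := k'+m$ consecutive (nonempty) blocks $B_1, \ldots, B_N$, and view $\tau$ as reordering them, so that $x_{\sigma'}$ reads left to right as $B_{\tau(1)} \cdots B_{\tau(N)}$. The hypothesis $x_\sigma \neq c_1$ guarantees $N \geq 2$, so there is genuinely something to permute. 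I would then split $|\sigma'|$ as the number of inversions with both coordinates inside a single block plus the number of \emph{cross-inversions} between two distinct blocks, and argue that the first quantity is unchanged by $\tau$ while the second can only grow.

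The key step, which I expect to carry the weight of the proof, is to show that in the original order the blocks are totally ordered by value: if $B_p$ precedes $B_q$, then every index in $B_p$ has strictly smaller $\sigma$-value than every index in $B_q$. This is exactly where Proposition \ref{prop-lgffacts}(2) enters, since it says every inversion $(i,j) \in R_\sigma$ has both coordinates inside a single original chunk $c_l$. The bookkeeping to complete is that each original chunk lies inside a single block of the chunk-preserving decomposition: for $l < k'$ this holds because $c_l \preceq c_l'$ and the $c_i'$ are packed consecutively, and for $l \geq k'$ (the case $k' < k$) it holds because the vanishing of the trailing blocks $y^{k'}_1 \cdots y^{k'}_{m_{k'}}$ forces $c_{k'}'$ to absorb the entire tail $c_{k'} w_{k'} \cdots c_k w_k$. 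Hence no inversion straddles two distinct blocks, and for $a < b$ lying in distinct blocks the absence of an inversion forces $\sigma(a) < \sigma(b)$, which is precisely the asserted value-ordering.

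With the value-ordering established, the conclusion is a short count. Permuting blocks never alters the relative order of two indices inside the same block, so the within-block inversions of $\sigma'$ agree with those of $\sigma$. For a pair of blocks $B_p, B_q$ with $p<q$ there are $l(B_p)\,l(B_q)$ cross pairs, and since $B_p$ sits entirely below $B_q$ in value these contribute $0$ cross-inversions when $B_p$ still precedes $B_q$ and all of $l(B_p)\,l(B_q)$ of them when $\tau$ reverses their order. Thus $|\sigma'| - |\sigma| = \sum l(B_p)\,l(B_q)$, the sum taken over the pairs $p<q$ that $\tau$ places out of order, i.e.\ over the inversions of $\tau$. As all blocks are nonempty and $\tau \neq 1$ has at least one inversion, this sum is at least $1$, giving $|\sigma'| > |\sigma|$.

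The only real obstacle is the middle paragraph: pinning down the containment of each original chunk inside a single decomposition block — in particular the slightly fussy tail case $k' < k$ — and from it extracting the clean statement that the blocks are totally ordered by value. Once that structural fact is in hand, the strict increase of the inversion count is essentially automatic.
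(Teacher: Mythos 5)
Your proof is correct, and it takes a genuinely different route from the paper's. The paper argues in two local steps: first it shows $|\sg'| \geq |\sg|$ by pushing each inversion of $\sg$ forward (every pair in $R_{\sg}$ lies inside a chunk, hence inside a block that $\tau$ moves intact), and then it obtains strictness by locating the \emph{first} subword moved by $\tau$ and running a case analysis (a $y$-block, handled by part 1 of Proposition \ref{prop-lgffacts}, versus a $c'$-block, which further splits on whether the chosen index lies in $c_l$ or in $w_l$ and invokes clause 3 of the initial-chunk definition) to exhibit a single new inversion. You instead prove the stronger structural fact that no inversion of $\sg$ straddles two blocks --- so the blocks are totally ordered by value --- using only part 2 of Proposition \ref{prop-lgffacts} together with the containment of each original chunk in a single block, and from it derive the exact identity $|\sg'| = |\sg| + \sum l(B_p)l(B_q)$, summed over the inversions of $\tau$. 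This buys you several things: it replaces the paper's case analysis with one uniform argument; it quantifies the increase (e.g.\ it shows $|\sg'| - |\sg|$ is at least the word length of $\tau$, not merely positive), which the paper's witness-style argument does not; and it makes explicit the chunk-in-block containment --- in particular the tail case $k' < k$, where emptiness of $y^{k'}_1 \cdots y^{k'}_{m_{k'}}$ forces $c_{k'}'$ to absorb $c_{k'}w_{k'} \cdots c_k w_k$ --- which the paper's first step uses tacitly when it asserts that the images $i', j'$ of an inverted pair keep their order. Two small remarks: your aside that ``the $c_i'$ are packed consecutively'' is not needed, since $c_l \preceq c_l'$ alone gives $c_l \subseteq c_l'$; and both your argument and the paper's implicitly require all blocks of the decomposition to be nonempty (the statement fails otherwise, since transposing an empty block with a neighbour leaves $\sg$ unchanged) --- you at least flag this assumption, which the paper leaves silent.
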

\begin{proof}
	First note that $|\sg'| \geq |\sg|$, since by part 2 of Proposition \ref{prop-lgffacts},  any pair $(i,j) \in R_{\sg}$ belongs to some chunk $c_l$. Under $\tau$ these indices get sent to  $i\mapsto i'$ and $j\mapsto j'$ respectively, and their order is preserved, i.e. $i' < j'$. But then we have $\sg'(j')=\sg(j) < \sg(i) = \sg'(i')$, and $(i',j') \in R_{\sg'}$. 
	
	 Therefore we must show that there is at least one additional pair belonging to $R_{\sg'}$. Since $\tau \neq 1$ there is some first (reading left to right) subword that is moved by $\tau$. We have two cases:\\
	\emph{Case 1:} Assume that $y^l_m$ is the first subword moved by $\tau$, and pick any $i \in y^l_m$. Since $y^l_m$ is moved, there is some subword $w$ in the given decomposition of $x_{\sg}$ that follows $y^l_m$, and gets moved to the position of $y^l_m$ under $\tau$. Pick $j \in w$. Since $w$ follows $y^l_m$ we have $i < j$. By part 1 of Proposition \ref{prop-lgffacts} $(i,j) \notin R_{\sg}$, i.e. $\sg(i) < \sg(j)$. Under $\tau$ the indices $i$ and $j$ get sent to $i\mapsto j'$ and $j \mapsto i'$ respectively, with $i'<j'$. But then  $\sg'(j') =\sg(i) < \sg(j)=\sg'(i')$, so $(i',j') \in R_{\sg'}$.\\
	\emph{Case 2:} Assume that $c_l'$ is the first subword moved by $\tau$, and pick any $i \in c_l'$. Since $c_l'$ is moved, there is some subword $w$ in the given decomposition of $x_{\sg}$ that follows $c_l'$, and is moved to the position of $c_l'$ under $\tau$.  Pick $j \in w$.  There are two subcases, depending on whether $i \in c_l$. If so, then by definition of left greedy form we must have that $(i,j) \notin R_{\sg}$, otherwise $j$ would belong to the chunk $c_l$. If not, then $i\in w_l$ and  again by part 1 of Proposition \ref{prop-lgffacts} we have $(i,j) \notin R_{\sg}$. In either case  $\sg(i) < \sg(j)$. As above, under $\tau$ the indices $i$ and $j$ get sent to $j'$ and $i'$, so that $\sg'(j')=\sg(i) < \sg(j)=\sg'(i')$ and $(i',j') \in R_{\sg'}$.  	
\end{proof}

\subsection{Constraints on left greedy form}\label{sec-colgf}

	To prove our main theorem, we need a few more facts regarding the left greedy form of monomials $x_{\sg}$ when $\sg$ is ``small'' in the geometry of $S_n$.
First we note that the number of chunks must be bounded. 

\begin{prop}\label{prop-numchunk} Let $x_{\sg} = w_0c_1 \cdots w_{k-1}c_kw_k \in P_n$ be in left greedy form, and $\sg \in B(K)$ for some $K\geq 1$.  Then $k < K$, i.e. the number of chunks is at most $K-1$.  
\end{prop}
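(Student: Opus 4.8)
The plan is to produce an injection from the set of chunks $\{c_1, \dots, c_k\}$ into the descent set $R_{\sg}$, which instantly gives $k \leq \# R_{\sg} = |\sg|$. Since $\sg \in B(K)$ means $|\sg| = d(\sg, 1) < K$, chaining these yields $k \leq |\sg| < K$, and when $K$ is an integer this is exactly $k \leq K - 1$. So the entire content of the proposition is the existence of this injection, and the first thing I would record is the translation (from the end of Section \ref{sec-geomSn}) that $|\sg| = \# R_{\sg}$, so the hypothesis reads simply $\# R_{\sg} < K$.

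The heart of the argument is the claim that \emph{every} chunk $c_l$ contains, within its own range of positions, at least one pair of $R_{\sg}$. By construction $c_l$ is the initial chunk of the tail $w_{l-1}c_l \cdots w_k$, so defining condition (1) supplies an index $i_0$ with $(i_0,j) \in R_{\sg}$ for some $j$, this pair being minimal in the dictionary order. Applying defining condition (3) at $i = i_0$ forces $j \leq j_0$, so the pair $(i_0, j)$ has both coordinates in the interval $[i_0, j_0]$ of positions occupied by $c_l$. Hence $c_l$ genuinely contains an inversion of $\sg$; I would fix one such pair $e_l \in R_{\sg}$ for each $l$.

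It then remains to check that $c_l \mapsto e_l$ is injective. The left greedy decomposition $x_{\sg} = w_0 c_1 w_1 \cdots c_k w_k$ has the chunks occupying pairwise disjoint ranges of positions, so a pair whose two coordinates both lie inside $c_l$ cannot have both coordinates inside $c_m$ for $m \neq l$; equivalently this is Proposition \ref{prop-lgffacts}(2) read as saying no inversion straddles two chunks. Thus $e_l \neq e_m$ for $l \neq m$, giving $k \leq \# R_{\sg} = |\sg| < K$, which is the desired bound.

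The only delicate point, and the step I expect to require the most care, is verifying that each chunk contains an inversion lying \emph{entirely} inside it, rather than merely being flanked by inversions at its boundary; this is precisely where condition (3) of the initial-chunk definition is needed, ensuring the minimal inversion starting at $i_0$ does not reach past $j_0$. Everything after that is routine bookkeeping about the disjoint position ranges of the chunks.
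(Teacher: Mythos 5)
Your proposal is correct and takes exactly the same route as the paper, whose entire proof is the one-line observation that each chunk $c_l$ contains at least one pair of $R_{\sg}$; you have simply filled in the details the paper leaves implicit (condition (3) forcing the minimal inversion $(i_0,j)$ to satisfy $j \leq j_0$, and disjointness of the chunks' position ranges giving injectivity). Nothing further is needed.
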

\begin{proof}
	Since each chunk $c_l$ contains at least one pair $(i,j) \in R_{\sg}$. 
\end{proof}
We wish to use this fact to bound the length of chunks in left greedy form. First we need the following.  

\begin{lem}\label{lem-sizechunk} Let $x_{\sg} \in P_n$, $(i,j) \in R_{\sg}$, and $w$ be the subword $w=x_{\sg(i)} \cdots x_{\sg(j)}$. Then $|\sg| \geq l(w) - 1$. 
\end{lem}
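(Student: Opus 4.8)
The plan is to reduce the statement to a purely combinatorial fact about the sequence of values carried by $w$, and then to exhibit the required inversions explicitly. Recall from Section~\ref{sec-geomSn} that $|\sg| = \#R_{\sg}$. Since every pair $(a,b)$ with $i \le a < b \le j$ and $\sg(a) > \sg(b)$ automatically lies in $R_{\sg}$, it suffices to produce $l(w) - 1 = j - i$ \emph{distinct} such pairs using only positions in the range $\{i, \dotsc, j\}$. Equivalently, writing $m = j - i$ and $t_p = \sg(i+p)$ for $0 \le p \le m$, the goal is to show that the sequence $t_0, \dotsc, t_m$ of distinct integers --- which satisfies $t_0 > t_m$ precisely because $(i,j) \in R_{\sg}$ --- has at least $m$ inversions.

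To produce these inversions I would anchor at the two endpoints $i$ and $j$ and sort the intermediate positions by their value relative to $\sg(i)$. Partition the positions $\{i+1, \dotsc, j\}$ into $A = \{a : \sg(a) < \sg(i)\}$ and $B = \{b : \sg(b) > \sg(i)\}$, so that $\#A + \#B = j - i$, and note $j \in A$ since $\sg(j) < \sg(i)$. Then count two families of inversions: first, for each $a \in A$ the pair $(i,a)$ is an inversion, because $i < a$ and $\sg(i) > \sg(a)$; second, for each $b \in B$ the pair $(b,j)$ is an inversion, because $b < j$ and $\sg(b) > \sg(i) > \sg(j)$.

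It then remains to check that these two families are disjoint, which is immediate: every pair of the first kind has left coordinate $i$, whereas every pair of the second kind has left coordinate $b \ge i+1 > i$. Hence the two families contribute $\#A + \#B = j - i$ distinct elements of $R_{\sg}$, yielding $|\sg| = \#R_{\sg} \ge j - i = l(w) - 1$, as desired.

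The one place demanding care --- and the step I would flag as the main obstacle --- is arranging the count so that it genuinely delivers $j-i$ \emph{distinct} pairs. A naive attempt to assign one inversion to each of the $j-i$ consecutive ``gaps'' between positions fails, since a single inversion $(a,b)$ straddles every gap strictly between $a$ and $b$ and would be counted repeatedly. The two-anchor partition above sidesteps this double-counting, at the cost of verifying that the lone pair lying within both anchors' reach, namely $(i,j)$, is not counted twice: it occurs only in the first family (as $j \in A$) and never in the second, since second-family pairs have left coordinate strictly greater than $i$.
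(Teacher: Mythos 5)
Your proof is correct and is essentially the paper's own argument: the paper invokes property (2) of descent sets to obtain, for each intermediate position $i'$ with $i < i' < j$, one of the inversions $(i,i')$ or $(i',j)$, which is exactly what your partition into $A$ and $B$ produces by comparing $\sigma(i')$ with $\sigma(i)$ (using $\sigma(i) > \sigma(j)$ for the $B$ case). The only differences are cosmetic: you prove the relevant instance of the betweenness property inline rather than citing it, and you verify the distinctness of the resulting $j-i$ pairs explicitly via the left-coordinate observation, a point the paper's proof leaves implicit.
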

\begin{proof}
	Since $(i,j) \in R_{\sg}$ we get (at least) one  pair in $R_{\sg}$ for each $i'$ with $i<i'<j$, by the properties of $R_{\sg}$ (see Section \ref{sec-geomSn}). That is, we get an additional $j-i -1$ pairs in $R_{\sg}$, for a total of at least $j-i-1 + 1 = j-i+1 - 1 = l(w) -1$. 
\end{proof}

This can be used to bound the length of chunks in the left greedy form of $x_{\sg}$ when $\sg$ is small. 


\begin{cor}\label{cor-sizechunk} Let $x_{\sg}=w_0c_1 \cdots w_{k-1}c_kw_k \in P_n$ be in left greedy form, and $\sg \in B(K)$ for some $K \geq 1$. Then $\sum l(c_j) < 2K$. 
\end{cor}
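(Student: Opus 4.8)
The plan is to bound $\sum_j l(c_j)$ by the total number of inversions plus the number of chunks, and then to invoke the facts that both of these quantities are smaller than $K$. Write $r_l$ for the number of inversion pairs $(i,j) \in R_{\sg}$ with both $i$ and $j$ lying in the chunk $c_l$. By part 2 of Proposition \ref{prop-lgffacts} every inversion of $\sg$ lies entirely inside a single chunk, and since the chunks occupy disjoint blocks of positions, the collection $\{r_l\}$ partitions $R_{\sg}$; hence $\sum_l r_l = \# R_{\sg} = |\sg|$. Since $\sg \in B(K)$ we have $|\sg| \le K-1$, and Proposition \ref{prop-numchunk} gives $k \le K-1$. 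So once I establish the per-chunk estimate $l(c_l) \le r_l + 1$, summing over $l$ yields $\sum_l l(c_l) \le |\sg| + k \le 2(K-1) < 2K$, which is exactly the claim.

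The heart of the argument is therefore the inequality $l(c_l) \le r_l + 1$, equivalently that a chunk of length $L$ contains at least $L-1$ inversions. I would prove this by a connectivity argument. Form the graph $H$ whose vertices are the positions $i_0, i_0+1, \dotsc, j_0$ occupied by $c_l$, and whose edges are exactly the inversions $(i,j) \in R_{\sg}$ with $i_0 \le i < j \le j_0$; by construction $H$ has $L = l(c_l)$ vertices and $r_l$ edges, so it suffices to show that $H$ is connected, since a connected graph on $L$ vertices has at least $L-1$ edges. The mechanism behind Lemma \ref{lem-sizechunk} supplies the local input: whenever $(i,j) \in R_{\sg}$, then for each intermediate position $i'$ with $i < i' < j$ we have either $(i',j) \in R_{\sg}$ or $(i,i') \in R_{\sg}$, so every vertex of the interval $[i,j]$ is joined in $H$ to $i$ or to $j$, making the entire interval $[i,j]$ connected in $H$.

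To finish I would feed in the greedy construction of Lemma \ref{lem-constructlgf}, which exhibits $c_l$ as a union of such inversion-intervals overlapping in sequence: starting from the minimal inversion $(i_0, j_0^1)$, each extension step adjoins an inversion $(i',j')$ whose left endpoint $i'$ lies in the interval accumulated so far, so the new interval shares a vertex with it. As each interval is connected in $H$ and each successive interval overlaps the previously accumulated (connected) set, the union remains connected, and the process terminates precisely when the accumulated interval equals all of $[i_0,j_0]$. Hence $H$ is connected and the estimate follows. The main obstacle is exactly this last step: one must verify that the greedy extensions genuinely cover $[i_0,j_0]$ and overlap consecutively, rather than attempting to bound a chunk by a single spanning inversion, since in general $(i_0,j_0)$ itself need not be an inversion.
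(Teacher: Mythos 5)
Your proof is correct, and its skeleton---reduce to the per-chunk estimate $l(c_l) \le r_l + 1$, then sum and combine $|\sg| \le K-1$ with Proposition \ref{prop-numchunk}---is exactly the paper's: the paper likewise writes $\sum l(c_j) \le \sum |c_j| + k = |\sg| + k < 2K$. Where you genuinely diverge is in how the per-chunk estimate is proved. The paper extracts from Lemma \ref{lem-constructlgf} a chain of inversions $(i_0,j_1), (i_1,j_2), \dotsc, (i_m,j_{m+1}=j_0)$ with interleaved endpoints, applies the counting mechanism of Lemma \ref{lem-sizechunk} to each, and telescopes: $|c_j| \ge \sum_l (j_{l+1}-i_l) \ge (j_1-i_0) + \sum_l (j_{l+1}-j_l) = j_0 - i_0 = l(c_j)-1$. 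You instead build the graph $H$ on the chunk's positions with internal inversions as edges, use axiom (2) of descent sets to show each inversion $(i,j)$ renders the whole interval $[i,j]$ connected in $H$, and use the greedy chain only to check that these intervals overlap consecutively and cover $[i_0,j_0]$; connectivity then yields at least $l(c_l)-1$ edges by the spanning-tree bound. Your route buys a real gain in rigor: the paper's first displayed inequality tacitly assumes the inversions counted for the different overlapping intervals are pairwise distinct, which is not immediate (for instance, a pair such as $(i_{l+1},j_{l+1})$ can be produced by the mechanism of Lemma \ref{lem-sizechunk} for two consecutive intervals in the chain), whereas an edge count in a simple graph cannot double-count, so your argument closes that gap automatically. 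The cost is a little extra machinery for the same numerical bound, and your closing caution is well placed: one cannot shortcut by a single spanning inversion, since $(i_0,j_0)$ itself need not belong to $R_{\sg}$.
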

\begin{proof} Let $c_j = x_{\sg(i_0)} \cdots x_{\sg(j_0)}$, and denote by $|c_j|$ the number of pairs in $R_{\sg}$ belonging to $c_j$. Since all pairs in $R_{\sg}$ must show up in some $c_j$ then $\sum |c_j| = |\sg|$.  By the construction of left greedy form (Lemma \ref{lem-constructlgf}), we can find a sequence of pairs $$(i_0,j_1), (i_1,j_2), \dotsc , (i_m,j_{m+1} = j_0)$$ belonging to $R_{\sg}$, such that: 
	\begin{enumerate} 
		\item $i_0 < i_1 < \cdots < i_m$,
		\item $j_1 < j_2 < \cdots < j_{m+1}$,
		\item $i_l\leq j_l$ for all $1\leq l \leq m$.
	\end{enumerate}
	Using  Lemma \ref{lem-sizechunk}, we then have
	\begin{align*} |c_j| & \geq  (j_1 - i_0) + (j_2 - i_1) + \cdots (j_0 - i_m) \\
		& \geq (j_1 - i_0) + (j_2 - j_1) + \cdots + (j_0 - j_m)  \\
		& = j_0 - i_0 = j_0 -i_0 +1 - 1 \\
		& = l(c_j) - 1. 
	\end{align*}
But then $\sum l(c_j) \leq \sum |c_j| + k = |\sg| +k < K + K =  2K$. 
\end{proof}

\subsection{Main theorem}

Finally we come to our main result.

\begin{thm}\label{thm-main} Let $A$ be a PI-algebra satisfying an identity of degree $d$, and let $K_n=\frac{(n-d)}{2}$. Then for $n \geq d$  we have $c_n(A) \leq \# \widehat{B}(K_n)$.
\end{thm}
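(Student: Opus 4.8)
The plan is to show that the ``large'' monomials span, i.e. that $P_n$ is spanned modulo $\Id(A)$ by $\{x_\sg : \sg \in \widehat{B}(K_n)\}$ (those with $|\sg| \geq K_n$); since $c_n(A) = \dim P_n/(P_n \cap \Id(A))$, this immediately gives $c_n(A) \leq \# \widehat{B}(K_n)$. Exactly as in Theorem \ref{classicbound}, I may assume $A$ satisfies an identity of the form $(*)$. It then suffices to prove that every monomial $x_\sg$ with $\sg \in B(K_n)$ is a linear combination, modulo $\Id(A)$, of large monomials. I would establish this by downward induction on $|\sg|$, equivalently by induction on $\binom{n}{2} - |\sg|$: if $|\sg| \geq K_n$ there is nothing to prove, so all the content lives in the inductive step where $|\sg| < K_n$.

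For the inductive step I would put $x_\sg$ in left greedy form $x_\sg = w_0 c_1 \cdots w_{k-1} c_k w_k$ via Lemma \ref{lem-constructlgf}, and then exploit smallness. By Corollary \ref{cor-sizechunk} we have $\sum_j l(c_j) < 2K_n = n-d$, so the letters of $x_\sg$ lying \emph{outside} the chunks number $n - \sum_j l(c_j) > d$. Keeping each chunk whole but breaking these non-chunk letters into individual subwords produces a decomposition that preserves chunks and has strictly more than $d$ subwords. I would then apply $(*)$ to any $d$ consecutive subwords of this decomposition, treating the remaining subwords as a fixed prefix and suffix; since $\Id(A)$ is a $T$-ideal, this rewrites $x_\sg \equiv \sum_{1 \neq \tau} \al_\tau x_{\tau(\sg)} \pmod{\Id(A)}$, where each $\tau$ corresponds to a nontrivial chunk-preserving permutation of the subwords. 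Corollary \ref{cor-lgfaction} then forces $|\tau(\sg)| > |\sg|$ for every term on the right, so the inductive hypothesis applies to each $x_{\tau(\sg)}$ and the step closes. (Smallness also guarantees $x_\sg \neq c_1$, since a single full chunk would have length $n$, contradicting Corollary \ref{cor-sizechunk}, so the hypothesis of Corollary \ref{cor-lgfaction} is met.)

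The step demanding the most care, and the main obstacle, is guaranteeing that the degree-$d$ identity can be applied \emph{while keeping every resulting monomial strictly larger}; this is precisely the purpose of the length bound $\sum_j l(c_j) < 2K_n$, which is what produces at least $d$ subwords outside the chunks. I would pay special attention to the configuration in which the non-chunk letters are concentrated in a single interior gap $w_l$ rather than in $w_0$ or the final tail. There one permutes $d$ consecutive letters \emph{within} that gap, and since by Proposition \ref{prop-lgffacts} such letters appear in no inversion of $\sg$, any nontrivial rearrangement of them manufactures new inversions and strictly raises $|\sg|$, in agreement with Corollary \ref{cor-lgfaction}. Verifying that this reduction is available in every configuration allowed by $|\sg| < K_n$ is the crux; once it is in hand, the induction and the dimension count finish the proof.
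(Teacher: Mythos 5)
Your proposal is correct and takes essentially the same route as the paper: left greedy form plus Corollary \ref{cor-sizechunk} to guarantee more than $d$ subwords in a chunk-preserving decomposition, then the identity $(*)$ combined with Corollary \ref{cor-lgfaction} to rewrite any monomial with $\sg \in B(K_n)$ as a combination of strictly larger ones, closed by (downward) induction on $|\sg|$ --- which you make explicit where the paper only says ``it will suffice.'' The one cosmetic difference is that the paper chooses a chunk-preserving decomposition with exactly $k' + \sum m_j = d$ subwords and applies $(*)$ to the whole monomial, while you keep all chunks and singleton letters (more than $d$ subwords) and apply $(*)$ to a window of $d$ consecutive subwords with fixed prefix and suffix; this is an equivalent specialization, since a permutation of the window fixing the rest is still a nontrivial element of $S_{k'+m}$, so Corollary \ref{cor-lgfaction} applies verbatim.
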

\begin{proof}
As in Theorem \ref{classicbound}, we may assume an identity of the form $$x_1 \cdots x_d = \sum_{1 \neq \tau \in S_d} \al_{\tau}x_{\tau(1)}\cdots x_{\tau(d)} \quad \quad \quad(*)$$
 We  show directly that any monomial $x_{\sg}\in P_n$ with $\sg \in B(K_n)$ can be written as a linear combination of monomials $x_{\sg'}$ with $\sg' \in \widehat{B}(K_n)$. It will suffice to show that any such $x_{\sg}$ can be written as a linear combination of $x_{\sg'}$  with $|\sg'| > |\sg|$.  

Let $x_{\sg} = w_0c_1 \cdots w_{k-1}c_kw_k$ be in left greedy form with $\sg \in B(K_n)$, and observe that $\sum l(c_j) + \sum l(w_j) =n$. By Corollary \ref{cor-sizechunk} we have that $\sum l(c_j) < 2K_n = n-d$. We would like to utilize Corollary \ref{cor-lgfaction}, for which we need that $k + \sum l(w_j) \geq d$. Indeed we have 
\begin{align*} k + \sum l(w_j) 
	& \geq \sum l(w_j) &&\\ 
	&= n - \sum l(c_j) &&\\
	&> n -  (n-d) &&  \\
	& = d.  && 
\end{align*}
This inequality says that we can always find a decomposition preserving chunks, of the form $x_{\sg} = y^0_1\cdots y^0_{m_0}c_1' \cdots c_{k'}'y^{k'}_1 \cdots y^ {k'}_{m_{k'}}$, where $k' + \sum m_j = d$. 

Using ($*$) and Corollary \ref{cor-lgfaction}, we then have that $x_{\sg} = \sum_{1\neq \tau  \in S_d} \al_{\tau} x_{\tau(\sg)} $, with $\tau(\sg) = \sg' $ satisfying $|\sg'| > |\sg|$.

\end{proof}

It is clear that one could improve the bound on the codimensions if there was some way to increase the radius of $\widehat{B}(K_n)$ in the above result. In particular the maximum size of an element in $S_n$ is $n \choose 2$$ = \frac{n(n-1)}{2}$, so if one could replace $K_n$ with a sequence that grows $\mathcal{O}(n^2)$ instead of just $\mathcal{O}(n)$, there may be some hope to provide a bound that is asymptotically better than $(d-1)^{2n}$.  However using the tools we have developed in this note, $K_n$ appears to be the best we can do.  


\section{Calculating the bound}
In this section we give an indication of how to compute $\# \widehat{B}(K_n)$, and provide a comparison to the classic bound of $(d-1)^{2n}$. Recall that $K_n = \frac{(n-d)}{2}$. 


\subsection{Algorithm and formula}
For a given $d$ and $n$, there is a nice algorithm for computing $\#\widehat{B}(K_n)$ that is easy to describe. It is  known - see for e.g. in \cite{DlH-2000}, result originally due to O. Rodrigues dating to 1838 - that one can count the number of permutations in $S_n$ with a given number of descents/inversions. Specifically, given the generating set $T=\{t_1, \dotsc, t_{n-1}\}$ and distance metric referenced in  Section \ref{sec-geomSn}, then $$\prod_{i=1}^{n-1} (1 + z + \cdots z^i) = \sum_{k=0}^{n \choose 2} I_n(k)z^k,$$ where the coefficient $I_n(k)$ counts the number of elements in $S_n$ of size $k$. Hence, one can simply multiply out the polynomial on the left hand side to determine $I_n(k)$, and then 
\begin{align}
	\#\widehat{B}(K_n) = \sum_{k=\ceil{K_n}}^{n\choose 2} I_n(k).
\end{align}

Furthermore, since $K_n < n$  one can use Knuth's formula for $I_n(k)$  to  provide an explicit formula for $\#\widehat{B}(K_n)$. Recall that the \emph{pentagonal numbers} are defined as $u_j = \frac{j(3j-1)}{2}$. 
\begin{prop}(Knuth) For $k \leq n$, 
\begin{align}\label{eq-Ink}I_n(k) = \binom{n+k-1}{k} + \sum_{j=1}^{\infty} (-1)^j \binom{n+k -u_j -j -1}{k-u_j -j} + \sum_{j=1}^{\infty} (-1)^j \binom{n+k -u_j -1}{k-u_j }.
\end{align}
\end{prop}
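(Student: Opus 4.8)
The plan is to read off $I_n(k)$ directly from the generating function, exploiting the hypothesis $k \le n$ to trade the finite product for Euler's infinite pentagonal product. First I would telescope each factor via $1 + z + \cdots + z^i = \frac{1 - z^{i+1}}{1-z}$, which turns the displayed identity into the rational expression
\[
\sum_{k=0}^{\binom{n}{2}} I_n(k)\, z^k \;=\; \prod_{i=1}^{n-1}\bigl(1 + z + \cdots + z^i\bigr) \;=\; \frac{\prod_{m=2}^{n}(1 - z^m)}{(1-z)^{n-1}} .
\]
The numerator is a truncation of the Euler product $\prod_{m \ge 1}(1 - z^m)$, and the whole point of the restriction $k \le n$ is that this truncation is invisible in low degrees.

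Precisely, I would work in the ring of power series modulo $z^{n+1}$. Factoring $\prod_{m \ge 1}(1 - z^m) = (1-z)\,\prod_{m=2}^{n}(1 - z^m)\,\prod_{m > n}(1 - z^m)$ and noting that every nonconstant term of the tail $\prod_{m > n}(1 - z^m)$ has degree at least $n+1$, so that $\prod_{m > n}(1 - z^m) \equiv 1 \pmod{z^{n+1}}$, gives $\prod_{m=2}^{n}(1 - z^m) \equiv \frac{\prod_{m \ge 1}(1 - z^m)}{1-z} \pmod{z^{n+1}}$. Since multiplying a congruence modulo $z^{n+1}$ by a power series with unit constant term preserves it, dividing by $(1-z)^{n-1}$ yields
\[
\sum_{k} I_n(k)\, z^k \;\equiv\; \frac{\prod_{m \ge 1}(1 - z^m)}{(1-z)^{n}} \pmod{z^{n+1}},
\]
so that for every $k \le n$ the integer $I_n(k)$ equals the coefficient of $z^k$ on the right-hand side.

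It then remains to expand the two factors on the right and take their Cauchy product. Euler's pentagonal number theorem gives $\prod_{m \ge 1}(1 - z^m) = 1 + \sum_{j \ge 1}(-1)^j\bigl(z^{u_j} + z^{u_j + j}\bigr)$, where $u_j = \frac{j(3j-1)}{2}$ and $u_j + j = \frac{j(3j+1)}{2}$ run through the two families of generalized pentagonal numbers, and the negative binomial series gives $\frac{1}{(1-z)^n} = \sum_{\ell \ge 0}\binom{n + \ell - 1}{\ell} z^\ell$. Extracting the coefficient of $z^k$ — pairing the constant term of the Euler factor with $\binom{n+k-1}{k}$, the term $z^{u_j + j}$ with $\binom{n + k - u_j - j - 1}{k - u_j - j}$, and the term $z^{u_j}$ with $\binom{n + k - u_j - 1}{k - u_j}$ — reproduces exactly the three sums in \eqref{eq-Ink}. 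I expect the one genuinely delicate point to be the middle step: verifying carefully that the truncation modulo $z^{n+1}$ is legitimate, that it is precisely the hypothesis $k \le n$ which makes the replacement exact, and that the two pentagonal families $u_j$ and $u_j + j$ are matched to the correct sums with the common sign $(-1)^j$.
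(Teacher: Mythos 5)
The paper states this proposition without proof, simply attributing it to Knuth (the formula is quoted from \cite{M-2001}), so there is no internal argument to compare against; your proposal supplies a complete and correct proof, and it is essentially the standard derivation. The chain of identities checks out: $\prod_{i=1}^{n-1}(1+z+\cdots+z^i)=\prod_{m=2}^{n}(1-z^m)/(1-z)^{n-1}$, and the replacement of the truncated product by the full Euler product is legitimate because congruence modulo the ideal $(z^{n+1})$ in $\ZZ[[z]]$ is preserved under multiplication by any power series, in particular by $(1-z)^{-(n-1)}$ --- this is precisely where the hypothesis $k\le n$ is used, and you flag it correctly. The final bookkeeping is also right: writing Euler's theorem as $1+\sum_{j\ge 1}(-1)^j\bigl(z^{u_j}+z^{u_j+j}\bigr)$ with $u_j+j=\frac{j(3j+1)}{2}$ (the $j<0$ terms of the bilateral sum, which carry the same sign $(-1)^j$ since $(-1)^{-j}=(-1)^j$), and convolving with $\frac{1}{(1-z)^n}=\sum_{\ell\ge 0}\binom{n+\ell-1}{\ell}z^\ell$, yields exactly the three terms of the stated formula, with the family $z^{u_j+j}$ producing the first sum and $z^{u_j}$ the second; the convention that binomial coefficients with negative lower index vanish (noted in the paper) makes both sums finite, consistent with $u_j\le k\le n$ forcing only finitely many nonzero terms.
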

The sums on the right hand side converge, as the binomial coefficients are defined to be zero when the lower index is negative. Using Formula 2 above we then take 
\begin{align}\#\widehat{B}(K_n) = n! - \sum_{k=0}^{\floor{K_n}} I_n(k).
\end{align}

\subsection{Asymptotics}
In \cite{M-2001}, asymptotic estimates for $I_n(k)$ with $k\leq n$ were derived using Formula \ref{eq-Ink}. These provide a convenient functional form which we use to demonstrate that our bound is asymptotically worse than $(d-1)^{2n}$. 
\begin{thm}[\cite{M-2001}] For $k \leq n$ and $n$ large,
	\begin{align*} I_n(n-k) \approx \wt{I}_n(n-k)= \frac{2^{2n-k-1}}{\sqrt{n\pi}}Q(1 + \mathcal{O}(n\inv)),
	\end{align*}
	where $Q=\prod_{j=1}^{\infty} (1 - \frac{1}{2^j}) < 1$. 
\end{thm}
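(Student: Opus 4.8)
The plan is to start from Knuth's formula \eqref{eq-Ink} applied with argument $n-k$ in place of the dummy index, and to reduce every binomial coefficient appearing there to a common, symmetric shape. Writing $m=n-k$, each summand has the form $\binom{n+m-a-1}{m-a}$ for a nonnegative integer $a$, and since $(n+m-a-1)-(m-a)=n-1$ we may rewrite it as $\binom{2n-k-a-1}{n-1}$. Thus
\[
I_n(n-k) = \sum_{a}(\pm 1)\binom{2n-k-a-1}{n-1},
\]
where $a$ ranges over $0$ (the leading term, sign $+1$), over the pentagonal numbers $u_j$ (sign $(-1)^j$, from the third sum in \eqref{eq-Ink}), and over $u_j+j$ (sign $(-1)^j$, from the second sum). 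The purpose of this rewriting is that all terms now differ from the leading one only through a shift $a$ of the top index, while the bottom index is frozen at $n-1$.

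Next I would estimate the ratio of a shifted binomial to the leading one. For fixed $a$,
\[
\frac{\binom{2n-k-a-1}{n-1}}{\binom{2n-k-1}{n-1}} = \prod_{i=0}^{a-1}\frac{n-k-i}{2n-k-1-i},
\]
and in the regime $k=o(n)$ each factor tends to $\tfrac12$, so the ratio is $2^{-a}(1+\mathcal{O}(n\inv))$. Factoring out $\binom{2n-k-1}{n-1}$ then gives
\[
I_n(n-k) \approx \binom{2n-k-1}{n-1}\left[1 + \sum_{j=1}^{\infty}(-1)^j 2^{-u_j} + \sum_{j=1}^{\infty}(-1)^j 2^{-(u_j+j)}\right].
\]
I would then recognize the bracketed quantity via Euler's pentagonal number theorem. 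Since $u_j+j=\tfrac{j(3j+1)}{2}$ is exactly the generalized pentagonal number indexed by $-j$, the two sums together run over all nonzero integer indices, and specializing $\prod_{j\ge 1}(1-x^j)=\sum_{i\in\Z}(-1)^i x^{i(3i-1)/2}$ to $x=\tfrac12$ identifies the bracket with $Q=\prod_{j\ge1}(1-2^{-j})$.

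Finally I would apply Stirling to the leading coefficient. Treating $\binom{2n-k-1}{n-1}$ as a binomial of order $N=2n-k-1$ evaluated a distance $(k-1)/2$ from its center, the standard central-binomial estimate yields $\binom{2n-k-1}{n-1}\approx \frac{2^{2n-k-1}}{\sqrt{\pi n}}$, the off-center Gaussian correction being $1+\mathcal{O}(n\inv)$ in the stated range. Combining this with the previous step gives $I_n(n-k)\approx \frac{2^{2n-k-1}}{\sqrt{n\pi}}\,Q\,(1+\mathcal{O}(n\inv))$, as claimed.

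The main obstacle I anticipate is making the passage from the termwise limits to the full series rigorous and uniform: I must justify interchanging the $n\to\infty$ limit with the infinite summation and collect all the per-term $\mathcal{O}(n\inv)$ errors into a single one. This is exactly where the quadratic growth of the pentagonal exponents $u_j$ is essential, since $2^{-u_j}$ decays fast enough that the tail of the series is negligible and the shifted binomials with $a$ comparable to $n$ — where the naive ratio estimate $2^{-a}$ breaks down — contribute nothing to leading order. Care is likewise needed to pin down the precise range of $k$ (effectively $k=o(\sqrt{n})$) for which both the ratio approximation and the off-center Stirling correction are genuinely $1+\mathcal{O}(n\inv)$.
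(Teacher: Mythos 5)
The paper gives no proof of this statement: it is imported verbatim from \cite{M-2001}, and your argument is in substance the derivation given there --- apply Knuth's formula \eqref{eq-Ink} at argument $n-k$, rewrite every term as $\binom{2n-k-a-1}{n-1}$ with $a$ running over $0$, $u_j$, and $u_j+j$, replace each shifted binomial by $2^{-a}(1+\mathcal{O}(n\inv))$ times the leading one, identify the resulting series with Euler's pentagonal number theorem at $x=\tfrac{1}{2}$ to get $Q$, and finish with a central/off-center Stirling estimate of $\binom{2n-k-1}{n-1}$. So your proposal is correct in outline and takes essentially the same route as the cited source. The one point worth retaining from your closing paragraph: the error term $1+\mathcal{O}(n\inv)$ cannot hold uniformly over all $k\le n$ as the theorem's hypothesis literally suggests (at $k=n$ the left side is $I_n(0)=1$ while the right side is exponentially large), so the statement must be read for fixed $k$, or $k$ growing slowly enough that both the ratio estimate and the off-center Gaussian correction stay within $1+\mathcal{O}(n\inv)$; your explicit flagging of this uniformity issue is a point of care that the paper's loose statement glosses over.
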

Setting $K=\floor{K_n}$, we get a lower bound $\phi(n)$ for $\#\widehat{B}(K_n)$, 
\begin{align*} \#\widehat{B}(K_n) & \approx n! - \sum_{k =K}^n \wt{I}_n(n-k)\\ 
	& > n! - \sum_{k=K}^{n} 2^{2n-k-1} \\
	& = n! - (2^{2n-K} -2^{n-1})\\
	&:= \phi(n)\\
	&>(d-1)^{2n},
\end{align*}
where the last inequality holds as  $n!\in \mathcal{O}(\phi(n))$.

\subsection{Small codimensions}
One possible advantage of the bound we have provided is that it is sharper for ``small'' codimensions. More precisely, let $n(d)=n_d$ be the least integer for which $(d-1)^{2n_d} < n_d!$. This is an increasing function of $d$. 

\begin{figure}[H]
    \centering
    \caption{Smallest $n$ such that $(d-1)^{2n} < n!$}
    \label{fig-nd}	
    \includegraphics[scale=0.55]{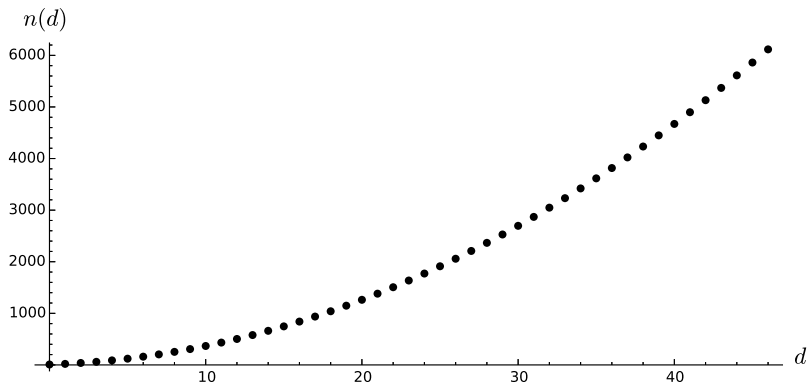}
    \vspace{-3mm}
\end{figure}

For $m$ such that $d \leq m < n(d)$, observe that the effective bound on the codimension $c_m(A)$ given by $d$-good monomials  is therefore actually  $m!$. However, we know simply due  to the constraints of geometry that $\#\widehat{B}(K_m)$ is strictly less than $m!$ for all $m>d$. 
This suggests that one should take the bound on $c_n(A)$ to be $\#\widehat{B}(K_n)$ until it surpasses $(d-1)^{2n}$. 

 \bibliographystyle{amsplain}

\end{document}